\documentclass[12pt]{article}
\title{The $T_{4}$ and $G_{4}$ constructions of Costas arrays}
\author{Tim Trudgian\footnote{Supported by Australian Research Council DECRA Grant DE120100173.}\\
Mathematical Sciences Institute\\ The Australian National University,
 ACT 0200, Australia\\ timothy.trudgian@anu.edu.au\\ \\ and \\ \\
 Qiang Wang\footnote{Supported by NSERC of Canada.}\\ School of Mathematics and Statistics\\ Carleton University\\ Ottawa, Ontario, K1S 5B6, Canada\\ wang@math.carleton.ca
}
\usepackage{url}
\usepackage{amsthm}
\usepackage{comment}
\usepackage{amsmath}
\usepackage{fullpage}
\usepackage{amssymb}

\usepackage{booktabs}
\newtheorem{thm}{Theorem}

\newtheorem{lem}{Lemma}
\begin{document}

\maketitle

\begin{abstract}
\noindent We examine two particular constructions of Costas arrays known as the Taylor variant of the Lempel construction, or the $T_{4}$ construction, and the variant of the Golomb construction, or the $G_{4}$ construction. We connect these constructions with the concept of Fibonacci primitive roots, and show that under the Extended Riemann Hypothesis the $T_{4}$ and $G_{4}$ constructions are valid infinitely often.
\end{abstract}

\section{Introduction}
A Costas array is an $N\times N$ array of dots with the properties that one dot appears in each row and column, and that no two of the $N(N-1)/2$ line segments connecting dots have the same slope and length.  
It is clear that a permutation $f$ of $\{1,2,\ldots,N\}$, from
the columns to the rows (i.e.\ to each column $x$ we assign exactly one row $f(x)$), gives a Costas array if and only if for $x
\neq y$ and $k \neq 0$ such that $1 \leq x, y, x+k, y+k \leq N$,
then $f(x+k)-f(x) \neq f(y+k)-f(y)$. 

Costas arrays were first considered by Costas \cite{costas1975} as
permutation matrices with ambiguity functions taking only the
values 0 and (possibly) 1, applied to the processing of radar and
sonar signals. The use of Costas arrays in radar is summarized in
\cite[\S 5.2]{levanon}. 
Costas arrays are also used in the design of optical orthogonal
codes for code division multiple access (CDMA) networks
\cite{maric1995}, and in the construction of low-density
parity-check (LDPC) codes \cite{chae2004}. 

Let us briefly recall some  known constructions  on
Costas arrays.  One can find more details in the survey papers
of Golomb and Taylor \cite{MR674209,golomb1984}, Drakakis \cite{drakakis}, Golomb and Gong \cite{GolombGong}.
In the following, $p$ is taken to be a prime and $q$ a prime power.
 The known general
constructions for $N \times N$ Costas arrays are the Welch
construction for $N=p-1$ and $N=p-2$, the Lempel construction for
$N=q-2$, and the Golomb construction for $N=q-2$, $N=q-3$. Moreover,
if $q=2^k$, $k\geq 3$, the Golomb construction works for
$N=q-4$.
 The validity of the Welch and Lempel constructions is proved by
Golomb in \cite{MR749508}. The Golomb constructions for $N=q-3$ and
$N=2^k-4$ depend on the existence of (not necessarily distinct)
primitive elements $\alpha$ and $\beta$ in $\mathbb{F}_q$ such that
$\alpha+\beta=1$.   
The existence of primitive elements $\alpha$ and $\beta$ in $\mathbb{F}_q$ such that
$\alpha+\beta=1$  was proved by Moreno
and Sotero in \cite{moreno1990}. (Cohen and Mullen give a proof with
less computational checking in \cite{MR1209243}; more recently, Cohen, Oliveira e Silva, and Trudgian proved \cite{COT1} that, for all $q>61$, every non-zero element in $\mathbb{F}_{q}$ can be written as a linear combination of two primitive roots of $\mathbb{F}_{q}$.)

Among these algebraic constructions over finite fields,  there are the $T_4$ variant of the Lempel
construction for $N=q-4$ when there is a primitive element $\alpha$
in $\mathbb{F}_q$ such that $\alpha^2+\alpha=1$,  
  and the $G_4$ variant of the Golomb construction for $N=q-4$ when there are two primitive elements $\alpha$ and $\beta$ such that $\alpha + \beta = 1$ and $\alpha^{2} + \beta^{-1} = 1$. 
Through the study of primitive elements of finite fields,  Golomb proved in   
\cite{golomb1992} that $q$ must be either 4, 5 or 9, or a prime $p
\equiv \pm 1 \pmod{10}$ in order for the $T_4$ construction to apply. 
Note that this is a necessary but not sufficient condition (for example $p=29$).   In the same paper, 
Golomb also proved that the values of $q$ such that the $G_4$ construction
occurs are precisely $q = 4, 5, 9$, and those
primes $p$ for which the $T_4$ construction occurs and which satisfy either
$p \equiv 1 \pmod{20}$  or $p \equiv 9 \pmod{20}$.

In this paper, we connect the $T_4$ and $G_{4}$ constructions with the concept of Fibonacci primitive roots. We show, in Theorems \ref{t1} and \ref{t2}, that under the Extended Riemann Hypothesis (ERH) there are infinitely many primes such that $T_4$ and $G_4$ can apply. We conclude with some observations and questions about trinomials of primitive roots.

\section{Fibonacci primitive roots}\label{section:FPR}

The $T_{4}$ construction requires a primitive root $\alpha$ such that
\begin{equation}\label{dog}
\alpha^{2} + \alpha = 1.
\end{equation}
To investigate the nature of solutions to (\ref{dog}) we recall the notion of a \textit{Fibonacci primitive root}, or \textit{FPR}. 
We say that $g$ is a FPR modulo $p$ if $g^{2} \equiv g + 1 \pmod p$. Shanks and Taylor \cite{ShanksTaylor} proved a similar statement to that which we give below.

\begin{lem}\label{onlyLem}
If $g$ is a FPR modulo $p$, then $g-1$ is a primitive root modulo $p$ that satisfies (\ref{dog}), and vice versa.
\end{lem}
\begin{proof}
It is clear that $g$ satisfies $g^{2} \equiv g + 1 \pmod p$ if and only if $g-1$ satisfies (\ref{dog}): all that remains is to check that $g$ and $g-1$ are primitive. Suppose first that $g$ is a FPR modulo $p$. Then, since $g(g-1) \equiv 1\equiv g^{p-1}$, we have
\begin{equation*}\label{cat}
(g-1)^{n} \equiv g^{p-n-1} \pmod p, 
\end{equation*}
Note that, as $n$ increases from $1$ to $p-1$, $g^{p-n-1}$ generates $\mathbb{F}_{p}$, since $g$ is primitive. Hence $g-1$ is a primitive root modulo $p$. The converse is similarly proved.
\end{proof}
Let $F(x)$ denote the number of primes $p\leq x$ that have at least one FPR. Shanks \cite{ShanksFPR} conjectured that under ERH, $F(x) \sim C \pi(x)$, where $\pi(x)$ is the prime counting function, and where $C \approx 0.2657\ldots $. Lenstra \cite{LenstraArtin} proved Shanks' conjecture; a proof also appears in Sander \cite{Sander}. 
We therefore have
\begin{thm}\label{t1}
Let $T(x)$ be the number of primes $p\leq x$ for which $p$ satisfies the $T_{4}$ construction. Then, under the Extended Riemann Hypothesis
\begin{equation*}\label{t1:e1}
T(x) \sim \frac{27}{38} \pi(x) \prod_{p=2}^{\infty} \left( 1- \frac{1}{p(p-1)}\right) \sim (0.2657\ldots) \pi(x).
\end{equation*}
\end{thm}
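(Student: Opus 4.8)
The plan is to deduce Theorem \ref{t1} as a near-immediate consequence of Lemma \ref{onlyLem} together with the density results of Shanks and Lenstra recalled above. The first step is to observe that, for a prime $p$, the $T_4$ construction applies exactly when $\mathbb{F}_p$ contains a primitive root $\alpha$ satisfying (\ref{dog}); and by Lemma \ref{onlyLem} such an $\alpha$ exists if and only if $p$ admits a Fibonacci primitive root. Hence the primes counted by $T(x)$ are precisely those counted by $F(x)$, so that $T(x) = F(x)$ for every $x$. I would take a moment over the boundary cases: the prime-power exceptions $q = 4, 9$ in Golomb's characterisation are not prime and so never contribute to $T(x)$, while $p = 5$ (where $x^2 - x - 1$ has the repeated root $3$, which is primitive) lies in both sets, so no discrepancy arises.

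With the identity $T(x) = F(x)$ in hand, the asymptotic is immediate: under ERH, Lenstra's theorem gives $F(x) \sim C\,\pi(x)$, while Shanks computed $C \approx 0.2657$. It then remains only to record that this constant has the stated closed form
\begin{equation*}
C = \frac{27}{38} \prod_{p=2}^{\infty}\left(1 - \frac{1}{p(p-1)}\right),
\end{equation*}
which is precisely the value emerging from the density computation for the golden ratio $(1+\sqrt5)/2$ regarded as an element of $\mathbb{Q}(\sqrt5)$.

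The single point needing genuine attention is this closed form. By the Chebotarev/Artin machinery, the density of primes for which a root of $x^2 - x - 1$ is a primitive root is a rational multiple of Artin's constant $A = \prod_p(1 - \frac{1}{p(p-1)})$. The rational prefactor $\frac{27}{38}$ collects the factor forcing $\left(\tfrac{5}{p}\right) = 1$ (so that $x^2 - x - 1$ splits), the inclusion--exclusion over the two conjugate roots (for which it suffices that one be primitive), and the entanglement of $\mathbb{Q}(\sqrt5)$, the splitting field of $x^2 - x - 1$, with the Kummer and cyclotomic extensions detecting primitivity, which in particular alters the local behaviour at the ramified prime $5$. I expect that disentangling these field relations cleanly, rather than any analytic difficulty, is the only real work here, since Lenstra already supplies the relation $F(x) \sim C\pi(x)$ under ERH and Shanks already supplies the numerical value $0.2657\ldots$.
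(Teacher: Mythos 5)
Your proposal is correct and follows essentially the same route as the paper: Lemma \ref{onlyLem} identifies the primes admitting the $T_4$ construction with the primes possessing a Fibonacci primitive root, so $T(x)=F(x)$, and the asymptotic is then exactly Lenstra's ERH-conditional proof of Shanks's conjecture, where the constant $\frac{27}{38}\prod_{p}\left(1-\frac{1}{p(p-1)}\right)$ is already supplied in closed form by Shanks. The only difference is one of emphasis: you treat verifying that closed form as residual work via the Chebotarev/Artin machinery, whereas the paper simply cites it from Shanks and Lenstra.
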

Unconditionally, it seems difficult to show that there are infinitely many primes that have a FPR. Phong \cite{Phong} has proved some results about a slightly more general class of primitive roots. For our purposes, \cite[Cor.\ 3]{Phong} implies that if $p\equiv 1, 9 \pmod{10}$ such that $\frac{1}{2}(p-1)$ is prime then there exists (exactly) one FPR modulo $p$. This does not appear, at least to the authors, to make the problem any easier!

We turn now to the $G_{4}$ construction, which requires two primitive roots $\alpha, \beta$ such that
\begin{equation*}\label{dog2}
\alpha + \beta = 1, \quad \alpha^{2} + \beta^{-1} = 1.
\end{equation*}
Since we require that $p \equiv 1, 9 \pmod{20}$ we are compelled to ask: how many of these primes have a FPR? We can follow the methods used in \cite[\S 8]{LenstraArtin}, and also examine Shanks's discussion in \cite[p.\ 167]{ShanksFPR}. Since we are now only concerned with $p\equiv 1, 9 \pmod{20}$ we find that the asymptotic density should be $\frac{9}{38}A$, where $A = \prod_{p=2}^{\infty} \left( 1- \frac{1}{p(p-1)}\right) \approx 0.3739558138$ is  Artin's constant.  This leads us to
\begin{thm}\label{t2}
Let $G(x)$ be the number of primes $p\leq x$ for which $p$ satisfies the $G_{4}$ construction. Then, under the Extended Riemann Hypothesis
\begin{equation*}\label{t2:e1}
G(x) \sim \frac{9}{38} \pi(x) \prod_{p=2}^{\infty} \left( 1- \frac{1}{p(p-1)}\right) \sim (0.08856\ldots) \pi(x).
\end{equation*}
\end{thm}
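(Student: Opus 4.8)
The plan is to reduce Theorem \ref{t2} to a density computation for Fibonacci primitive roots confined to a single class modulo $4$, and then to adapt Lenstra's analysis so as to read off the constant $\frac{9}{38}$. First I would invoke Golomb's characterisation recalled in the introduction \cite{golomb1992}: the $G_{4}$ construction applies to a prime $p$ precisely when the $T_{4}$ construction applies and, in addition, $p \equiv 1$ or $9 \pmod{20}$. By Lemma \ref{onlyLem} the $T_{4}$ construction applies exactly when $p$ possesses an FPR, so up to a bounded number of small exceptions
\[
G(x) = \#\{\, p \le x : p \text{ has an FPR and } p \equiv 1, 9 \pmod{20}\,\}.
\]
The four classes $1, 9, 11, 19 \pmod{20}$ exhaust the classes $p \equiv \pm 1 \pmod{10}$ that are forced by the solvability of (\ref{dog}); since $1 \equiv 9 \equiv 1 \pmod 4$ while $11 \equiv 19 \equiv 3 \pmod 4$, the extra $G_{4}$ condition is simply $p \equiv 1 \pmod 4$. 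Thus I must compute, under ERH, the density of FPR primes with $p \equiv 1 \pmod 4$.

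For this I would follow the Artin-type argument of Lenstra \cite[\S 8]{LenstraArtin} (see also Shanks \cite[p.\ 167]{ShanksFPR}): an FPR exists precisely when the golden ratio $\phi$, a root of $x^{2}-x-1$, generates $(\mathbb{Z}/p\mathbb{Z})^{*}$, and the density of such $p$ is given by inclusion--exclusion as $\sum_{n} \mu(n)/[K_{n}:\mathbb{Q}]$, with $K_{n} = \mathbb{Q}(\zeta_{n}, \phi^{1/n})$ and the individual Chebotarev densities controlled under ERH. To impose $p \equiv 1 \pmod 4$ I would intersect each condition with complete splitting in $\mathbb{Q}(i)$, that is, replace $K_{n}$ by $\mathbb{Q}(\zeta_{\mathrm{lcm}(n,4)}, \phi^{1/n})$ and recompute the degrees.

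The heart of the matter, and the step I expect to be the main obstacle, is the evaluation of these degrees at the primes $2$ and $5$, where two effects collide. At $5$ there is the entanglement $\mathbb{Q}(\sqrt{5}) \subset \mathbb{Q}(\zeta_{5})$ coming from $\mathrm{disc}(x^{2}-x-1)=5$, which drops $[K_{n}:\mathbb{Q}]$ below the generic value $n\varphi(n)$ for the relevant $n$ divisible by $5$ and is already responsible for the factor $\tfrac{27}{38}$ of Theorem \ref{t1}. At $2$ the new condition $p \equiv 1 \pmod 4$ forces $v_{2}(p-1) \ge 2$, tightening the $2$-power constraints on $\phi$ and altering the local contribution at $2$; because $\sqrt{5} \in \mathbb{Q}(\phi)$, these two effects are coupled rather than independent. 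I would therefore organise the sum as an Euler product, isolating the single coupled factor over $\{2,5\}$ from the tail $\prod_{p\neq 2,5}\bigl(1-\tfrac{1}{p(p-1)}\bigr)$, which neither constraint disturbs.

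Carrying this out, I expect the coupled $\{2,5\}$ factor to come out as $\tfrac{9}{80}$, as against $\tfrac{27}{80}$ for the unrestricted FPR count, so that the undisturbed tail reconstitutes $\pi(x)\prod_{p}\bigl(1-\tfrac{1}{p(p-1)}\bigr)$ and yields $G(x)\sim\tfrac{9}{38}\pi(x)\prod_{p}\bigl(1-\tfrac{1}{p(p-1)}\bigr)$. A useful consistency check is that the complementary classes $p\equiv 11,19\pmod{20}$ must then carry the residual factor $\tfrac{18}{80}$, so that the $G_{4}$ primes are exactly one third of the $T_{4}$ primes, in agreement with $\tfrac{9}{38}=\tfrac13\cdot\tfrac{27}{38}$; the numerical value $\tfrac{9}{38}A\approx 0.08856$ follows at once.
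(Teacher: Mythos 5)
Your proposal follows essentially the same route as the paper: you invoke Golomb's characterisation to reduce the $G_{4}$ count to the FPR primes lying in the classes $p \equiv 1, 9 \pmod{20}$, and then obtain the density $\frac{9}{38}A$ by running the Lenstra--Shanks machinery restricted to those classes, which is precisely what the paper does by citing \cite[\S 8]{LenstraArtin} and \cite[p.~167]{ShanksFPR}. Your additional scaffolding --- imposing $p \equiv 1 \pmod 4$ via the fields $\mathbb{Q}(\zeta_{\mathrm{lcm}(n,4)}, \phi^{1/n})$, tracking the coupled entanglement at $2$ and $5$, and checking that the local factor $\frac{9}{80}$ against $\frac{27}{80}$ makes the $G_{4}$ primes one third of the $T_{4}$ primes --- is a correct and somewhat more explicit rendering of the same computation that the paper leaves to the cited references.
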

\section{Conclusion}
One can show that, for $p>7$ there can be no primitive root $\alpha$ modulo $p$ that satisfies $\alpha + \alpha^{-1} \equiv 1 \pmod p$. (Suppose there were: then $\alpha^{2} + 1 \equiv \alpha \pmod{p}$ so that $\alpha^{3}+ \alpha^{2} + 1 \equiv \alpha^{2} \pmod{p}$ whence $\alpha^{3} \equiv -1\pmod{p}$. Hence $\alpha^{6} \equiv 1\pmod{p}$ --- a contradiction for $p>7$.) From this, it follows that $x^{p-2} + x - 1$ is never primitive over $\mathbb{F}_p$ for $p>7$.


Consider the following question: given $1\leq i\leq j \leq p-2$, let $d(i, j)$ denote the density of primes for which there is a primitive root $\alpha$ satisfying $\alpha^{i} + \alpha^{j} \equiv 1 \pmod p$. The above comments show that $d(1, p-2) = 0$; Theorem \ref{t1} shows that under ERH, $d(1, 2) \approx 0.2657$.  What can be said about $d(i, j)$ for other prescribed pairs $(i, j)$?  In the case $i=j$, we have $2 \alpha^i \equiv  1 \pmod{p}$ and thus $\alpha^i = \frac{p-1}{2}$. In particular, if $(i, p-1) =1$ then it is equivalent to ask for the density of primes such that $\frac{p-1}{2}$ is a primitive root modulo $p$.  We have not been able to find a reference for this in the literature, though computational evidence seems to suggest that this value should be close to Artin's constant $0.37395\ldots$.

When $i \neq j$, it is easy to see that  $d(2, \frac{p-1}{2} + 1) = d(1,2)$.  Therefore, under ERH the trinomial $x^{\frac{p-1}{2}+1} + x^2 - 1$ is primitive over $\mathbb{F}_p$ for infinitely many primes $p$. 
More generally, we can show that for $p > 3i$ there does not exist a primitive root $\alpha$ such that $\alpha^{\frac{p-1}{2} +i} + \alpha^{\frac{p-1}{2} +2i} \equiv 1 \pmod{p}$, and thus $d(\frac{p-1}{2} +i, \frac{p-1}{2} + 2i) =0$.  Similarly, $d(i, 2i+\frac{p-1}{2}) =0$.  Indeed, if $\alpha^i - \alpha^{2i} \equiv 1 \pmod{p}$ for a primitive $\alpha$, we obtain $\alpha^{3i}\equiv  \alpha^{2i} - \alpha^i \equiv -1\pmod{p}$. Hence we can show that if $p > 6i$ there is no primitive element $\alpha$ such that $\alpha^i + \alpha^{2i+\frac{p-1}{2}} \equiv 1 \pmod{p}$. Using the same arguments as before,  we can also show that $d(i, p-1-i) =0$ for any prefixed $i$.

\end{document}